\newtheorem{thm}{Theorem}[section]
\newtheorem{cor}[thm]{Corollary}
\newtheorem{lem}[thm]{Lemma}
\newtheorem{prop}[thm]{Proposition}
\newtheorem{ex}[thm]{Example}
\newtheorem{remark}{Remark}[section]
\newcommand{\dip}{\displaystyle}
\newcommand{\ord}{\mbox{ord}}
\newcommand{\cchi}{\mbox{\raise.48ex\hbox{\,$\chi$}}}
\newcommand{\ceil}[1]{\lceil #1 \rceil}
\newcommand{\N}{\mathbb N}
\newcommand{\Z}{\mathbb Z}
\newcommand{\Q}{\mathbb Q}
\newcommand{\F}{\mathbb F}
\def\quotient#1#2{%
  \raise1ex\hbox{$#1$}\Big/\lower1ex\hbox{$#2$}%
}
\begin{document}
\title{Dynamics of a quasi-quadratic map}

\thanks{This work was partially supported by Funda\c{c}\~ao para a
  Ci\^encia e Tecnologia (FCT), through Centro de Matem\'{a}tica da
  Universidade do Minho and Centro de Matem\'{a}tica da Universidade
  do Porto, FCT project UT-Austin/MAT/0035/2008 and Program POSI}

\author{Assis Azevedo}
\address{Department of Mathematics and  Applications, University of Minho,
  Campus de Gualtar, 4710-057 Braga , Portugal}
\email{assis@math.uminho.pt}

\author{Maria Carvalho}
\address{Department of Mathematics, University of Porto Rua do Campo
  Alegre 687, 4169-007 Porto, Portugal}
\email{mpcarval@fc.up.pt}

\author{Ant\'onio Machiavelo}
\address{Department of Mathematics, University of Porto Rua do Campo
  Alegre 687, 4169-007 Porto, Portugal}
\email{ajmachia@fc.up.pt}

\date{29 September 2012}

\keywords{Discrete dynamical system, ceiling function, density,
  covering system.}

\subjclass[2010]{11A07, 37P99}

\begin{abstract}
  We consider the map $\cchi:\Q\to\Q$ given by $\cchi(x)= x\ceil{x}$,
  where $\ceil{x}$ denotes the smallest integer greater than or equal
  to $x$, and study the problem of finding, for each rational, the
  smallest number of iterations of $\cchi$ that eventually sends it
  into an integer. Given two natural numbers $M$ and $n$, we prove
  that the set of irreducible fractions with denominator $M$ whose
  orbits by $\cchi$ reach an integer in exactly $n$ iterations is a
  disjoint union of congruence classes modulo $M^{n}$, establishing
  along the way a finite procedure to ascertain them. We also describe
  an efficient algorithm to decide if an orbit fails to hit an integer
  until a prescribed number of iterations, and deduce that the
  probability that an orbit enters $\Z$ is equal to one.
\end{abstract}

\maketitle

\section{Introduction}

Let $\cchi:\Q\to\Q$ be the map given by $\cchi(x)= x\ceil{x}$, where
$\ceil{x}$ denotes the smallest integer greater than or equal to $x$,
and consider the orbits $\left(\cchi^n(x)\right)_{n\in\N_0}$ of any
$x\in\Q$. We note that $\Z$ is invariant by $\cchi$, the fixed points
are the rational elements in $[0,1]$, $\cchi^{-1}\left(\{0\}\right)=\,
]-1,0]$, $\cchi(-x)=\cchi(x)-x$ if $x\in\Q\setminus\Z$, and that, if
$x\leq -1$, then $\chi(x)\geq1$.

For $\frac{p}{q}\in\Q\,\cap\,[1,+\infty[$, where $p,q$ belong to $\N$
and $(p,q)=1$, the iterate $\cchi^j\left(\frac{p}{q}\right)$ is an
irreducible quotient $\frac{p_j}{q_j}$, where $q_{j+1}$ divides
$q_j$. Therefore the sequence of denominators $q_j$ is decreasing,
although not strictly in general. For instance, the first iterates of
$\frac{31}{10}$ are
\begin{eqnarray*}
  \frac{62}{5},\quad\frac{806}{5},\quad\frac{130572}{5},\quad
  681977556.
\end{eqnarray*}
The number of iterates of $\frac{p}{q}$ needed to hit an integer may
be as large as we want. In fact, if we have any finite sequence of
positive integers $(q_j)_{j\leq n}$ where $q_{i+1}$ divides $q_j$ for
any $j$ then there exists $x\in\Q$ such that
$\cchi^j(x)=\frac{p_j}{q_j}$ with $(p_j,q_j)=1$ (see Remark
\ref{surjectivity}).  However, numerical evidence suggests that, for
any such $\frac{p}{q}$, there is a $j \in \N$ verifying $q_j=1$. This
behaviour bears a resemblance to the dynamics of
$G:\Q\,\cap\,[0,1]\to\Q\,\cap\,[0,1]$,
$G(x)=\frac{1}{x}-[\frac{1}{x}]$, $G(0)=0$, although in this case the
orbit of each rational number in $[0,1]$ is a sequence of irreducible
fractions whose denominators decrease strictly before it ends at $0$,
and this happens in finite time.

For $x\in\Q$, define the \emph{order of $x$} as
$$\ord(x)=\min\{k\in\N_0: \cchi^k(x)\in\Z\}$$
if this set is nonempty, and $\ord(x)=\infty$ otherwise. The integers
are the elements of order $0$; the rational numbers in $]0,1[$ have
infinite order. It is easy to evaluate the order of any irreducible
fraction $\frac{a}{2}$ in $\Q\,\cap\,[1,+\infty[$: given an odd
$a\in\N$, say $a=2^kb+1$ for a positive integer $k$ and an odd $b$,
using induction on $k \in \N$ and the equality
$$\cchi\left(\frac{a}{2}\right)=
\frac{2^{k-1}b\left(2^kb+3\right)+1}{2},$$
one has $\ord\left(\frac{a}{2}\right)=k$.  In particular, for each
$k\in\N$, the smallest positive irreducible fraction with denominator
$2$ whose order is $k$ is $\frac{2^k+1}{2}$. We note that not only
this smallest value increases with $k$, but it does so
exponentially. On the other hand, the following table, which displays
the smallest integer $a$ such that $1 \leq \ord(\frac{a}{3})\leq 50$,
shows that, within the rational numbers with denominator $3$, that no
longer holds.

\begin{figure} [h]
  \centering
  \begin{tabular}{|r|r|| r|r||r|r||}\hline
    order & smallest integer $a$ &order& smallest integer $a$ & order&
    smallest integer $a$\\\hline\hline
    1&7&18&2 215 & 35& 6 335 903 \\
    2&4& 19&6 151 &  36& 1 180 939\\
    3&13& 20&8 653& 37& 1 751 431\\
    4&20& 21&280& 38& 10 970 993\\
    5&10 & 22& 28& 39& 17 545 207\\
    6&5& 23&1 783& 40& 66 269 497\\
    7&29& 24&81 653& 41& 27 952 480\\
    8&76&25&19 310& 42&60 284 614\\
    9&50&26&114 698& 43& 203 071 951\\
    10&452&27&18 716& 44& 191 482 466\\
    11&244&28&196 832& 45& 144 756 173\\
    12&830 & 29&15 214&46& 45 781 445 \\
    13&49 & 30&7 148 &47& 1 343 664 136\\
    14&91& 31&273 223 & 48& 223 084 774\\
    15&319& 32&3 399 188 &49&1 494 753 473\\
    16&2 639& 33& 398 314& 50& 20 110 862\\
    17&5 753& 34 & 6 553 568 &  & \\
    \hline
  \end{tabular}
  \caption{\small Smallest positive integer $a$ such that
    $\frac{a}{3}$ has order between $1$ and $50$.}
\label{fig:caso3}
\end{figure}

We have also verified that, for $a\leq 2\,000\,000\,000$, the order of
$\frac{a}{3}$ is equal or less than $56$. Clearly, this computation
was not achieved directly from the definition of order, because the
iterates grow very rapidly: for example, $\frac{28}{3}$ has order $22$
and $\cchi^{22}(\frac{28}{3})$ is an integer with $4\, 134\, 726$
digits. Our numerical experiments were possible due to two redeeming
features: the dynamical nature of the problem, which allowed us to
reduce the difficulty in each iteration; and, moreover, the location
of the rational numbers with given denominator and a fixed order among
the elements of specific congruence classes modulo a certain power of
the denominator, which enabled us to deal only with numerators that
are limited by that power.

\medskip

In what follows, and after showing that the elements of order $n$ lay
in some congruence classes, we give in Theorem \ref{A(n,M)} a
recursive formula for the number of those classes. We use it to attest
that the natural, or asymptotic, density (see \cite{Ten}, p.~270) of
the elements of $\Q\,\cap\,[1,+\infty[$ that have infinite order is
zero (this follows from Theorem \ref{prob}). We then present an
efficient algorithm to determine if a rational number has an order
below a given bound. Finally we comment on some alternative approaches
and affinities that this problem seems to have with the Collatz
conjecture and the Erd\"os-Straus conjecture on unit fractions.

\section{Numbers of order $n$}

We are not aware of any efficient algorithm to evaluate the order of a
rational number. We also do not know if there are numbers, besides the
ones in the interval $]0,1[$, with infinite order. But we do have two
algorithms to decide if a rational $\frac{a}{M}$ has order $n$, for a
fixed $n$. We will see that, in both cases, one only needs to consider
$a<M^{n+1}$.

The first algorithm gives a way to find all the elements of order $n$
if one knows all the elements of order $n-1$, and it is a subproduct
of the results in this section. It relies on the resolution of a
number of quadratic congruences that increases exponentially with $n$,
but what is more significative is that one gets substantial
information on the structure of the elements of order $n$, which is
sufficient to prove that almost all elements have finite order.  The
second algorithm will be presented in section
\ref{sec:an-effic-algor}.

The underlying basic idea is simply to use the obvious fact that
$\ord\left(\cchi(x)\right)=\ord(x)-1$, for each $x\in\Q\setminus\Z$,
to find information about the elements of order $n$ from the ones of
order $n-1$, somehow reversing the dynamics of the map $\cchi$.
Surely, given $y\in\Q$, there is not in general a rational $x$ such
that $\cchi(x)=y$ (consider $y=\frac{5}{3}$, for example). But, in the
proof of Theorem \ref{A(n,M)}, we prove that, in some sense, the
process is reversible.

Let us start by caracterizing the elements that have order equal to
$1$.

\begin{lem}\label{um}
  Let $x=\frac{a}{M}$, where $a\in\Z$, $M\in\N$, with $M>1$, and
  $(a,M)=1$. Then $\ord(x)=1$ if and only if there exists
  $r\in\{1,2,\ldots, M\}$ such that $(r,M)=1$ and $a\equiv
  -r\pmod{M^2}$.
\end{lem}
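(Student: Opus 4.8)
The plan is to reduce the statement about $\ord(x)$ to a single divisibility condition and then make the ceiling function completely explicit. Since $M>1$ and $(a,M)=1$, the fraction $x=\frac{a}{M}$ is never an integer, so $\ord(x)\geq 1$ and hence $\ord(x)=1$ is equivalent to $\cchi(x)\in\Z$. Writing $\cchi(x)=\frac{a\ceil{a/M}}{M}$ and using $(a,M)=1$, one sees that $M\mid a\ceil{a/M}$ if and only if $M\mid\ceil{a/M}$. Thus the whole lemma comes down to translating the condition $M\mid\ceil{\tfrac{a}{M}}$ into a congruence modulo $M^2$.

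The key step is an exact formula for the ceiling. Let $r$ be the unique representative in $\{1,\ldots,M\}$ of $-a$ modulo $M$, i.e.\ $a\equiv -r\pmod M$. Because $(a,M)=1$ and $M>1$ force $M\nmid a$, in fact $r\in\{1,\ldots,M-1\}$ and $(r,M)=(a,M)=1$. Setting $k=\frac{a+r}{M}\in\Z$, one has $\frac{a}{M}=k-\frac{r}{M}$ with $0<\frac{r}{M}<1$, so $\ceil{\tfrac{a}{M}}=k=\frac{a+r}{M}$. Substituting this into the divisibility criterion above gives
$$\ord(x)=1\iff M\mid\frac{a+r}{M}\iff M^2\mid a+r\iff a\equiv -r\pmod{M^2},$$
which is precisely the asserted characterization, with the required $r$ and the coprimality condition $(r,M)=1$ obtained for free.

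For the converse direction I would run the same computation backwards: given $r\in\{1,\ldots,M\}$ with $(r,M)=1$ and $a\equiv -r\pmod{M^2}$, coprimality rules out $r=M$, so $r\in\{1,\ldots,M-1\}$; reducing the congruence modulo $M$ and invoking the ceiling formula yields $\ceil{\tfrac{a}{M}}=\frac{a+r}{M}$, and $M^2\mid a+r$ then gives $M\mid\ceil{\tfrac{a}{M}}$, hence $\ord(x)=1$.

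The only genuinely delicate point is pinning down $\ceil{\tfrac{a}{M}}$ precisely: one must argue that the integer $r\in\{1,\ldots,M-1\}$ with $a\equiv -r\pmod M$ satisfies $0<\frac{r}{M}<1$ so that no off-by-one error creeps into the value of the ceiling. Everything else is routine divisibility bookkeeping. I would also remark that the range $\{1,\ldots,M\}$ in the statement is slightly generous, since $(r,M)=1$ with $M>1$ automatically excludes $r=M$; I expect the authors state it this way only to keep $r$ ranging over a full residue interval, for uniformity with the analogous higher-order statements later in the section.
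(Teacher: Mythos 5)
Your proof is correct and takes essentially the same route as the paper's: both first reduce $\ord(x)=1$ to the divisibility condition $M\mid\ceil{\frac{a}{M}}$ (using $x\notin\Z$ and $(a,M)=1$) and then settle that condition by elementary residue bookkeeping with the ceiling. The only cosmetic difference is that you take the representative $r$ of $-a$ modulo $M$ and lift the congruence to modulus $M^2$ via the exact formula $\ceil{\frac{a}{M}}=\frac{a+r}{M}$, whereas the paper writes $a=kM^2-r$ with $0<r<M^2$ and reduces, checking when $\ceil{-\frac{r}{M}}=0$.
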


\begin{proof}
  We note first that $x\not\in\Z$, and also that $x$ has order $1$ if
  and only if $\cchi(x)\in\Z$, which is equivalent, since $(a,M)=1$,
  to the condition that $M$ divides $\ceil{\frac{a}{M}}$. Consider
  $k\in\Z$ and $0< r<M^2$ such that $a=kM^2-r$; notice that, as
  $(a,M)=1$, we have $(r,M)=1$ and $r\neq0$. Then
  $\ceil{\frac{a}{M}}=kM+\ceil{\frac{-r}{M}}$ and so
  \begin{eqnarray*}
    \mbox{$M$ divides $\ceil{\frac{a}{M}}$} & \Leftrightarrow &
    \mbox{$M$ divides $\ceil{-\frac{r}{M}}$} \\
    & \Leftrightarrow & \mbox{$\ceil{-\frac{r}{M}}=0$,\quad as
      $-M<-\frac{r}{M}<0$} \\
    & \Leftrightarrow & \mbox{$r\in\{1,2,\ldots,M-1\}$}.
  \end{eqnarray*}
\end{proof}
This lemma provides the basis for the induction in the proof of the
following result.
\begin{prop}\label{union}
  If $n\in\N$, then, for all $M \in \N$, the set
  \begin{eqnarray}\label{set} {\mathcal A}_{n,M} =
    \left\{a\in\Z:\> (a,M)=1,\> \ord\left(\frac{a}{M}\right)=n\right\}
  \end{eqnarray}
  is a disjoint union of congruence classes modulo $M^{n+1}$.
\end{prop}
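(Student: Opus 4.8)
The plan is to proceed by induction on $n$, with Lemma \ref{um} furnishing the base case $n=1$: there $\mathcal{A}_{1,M}$ is exactly the union of the classes $a\equiv -r \pmod{M^2}$ with $r\in\{1,\dots,M-1\}$ and $(r,M)=1$. For the inductive step I would reduce everything to a single assertion, namely that for $a$ coprime to $M$ the truth of $\ord(a/M)=n$ depends only on the residue of $a$ modulo $M^{n+1}$. Once this ``residue-determinacy'' is in place, $\mathcal{A}_{n,M}$ is automatically a union of congruence classes modulo $M^{n+1}$, and the union is disjoint because distinct classes to the same modulus are disjoint.

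To set up the reduction, write $t$ for the residue of $a$ modulo $M$, so $t\in\{1,\dots,M-1\}$ since $(a,M)=1$ and $M>1$, and observe that $\ceil{a/M}=(a-t)/M+1=(a+M-t)/M=:c$ for every sign of $a$. Then $\cchi(a/M)=ac/M$, whose reduced form is $a'/M'$ with $d:=(c,M)$, $M':=M/d$ and $a':=ac/d=a(a+M-t)/(dM)$, and $(a',M')=1$ follows from $(a,M)=1$. The first observation is that $c\bmod M$, and hence $d$ and $M'$, are determined by $a\bmod M^2$ alone. The second, genuinely computational, observation is that $a'\bmod (M')^n$ is determined by $a\bmod M^{n+1}$: if $a_1\equiv a_2\pmod{M^{n+1}}$ lie in a common class modulo $M^2$ (so they share the same $t$ and $d$), then the quasi-quadratic identity $a_1(a_1+M-t)-a_2(a_2+M-t)=(a_1-a_2)(a_1+a_2+M-t)$ gives $a_1'-a_2'=M^n k\,(a_1+a_2+M-t)/d$ where $a_1-a_2=M^{n+1}k$, and multiplying by $(M')^{-n}=d^n/M^n$ leaves the integer $k\,d^{n-1}(a_1+a_2+M-t)$, whence $a_1'\equiv a_2'\pmod{(M')^n}$.

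With these two facts the induction closes quickly. For $n\ge 2$ the residues with $M'=1$, equivalently those with $\cchi(a/M)\in\Z$, have order exactly $1$ and so are excluded, a condition depending only on $a\bmod M^2$; for the residues with $M'>1$ the relation $\ord(a/M)=1+\ord(a'/M')$ yields $\ord(a/M)=n\iff\ord(a'/M')=n-1$, and by the inductive hypothesis applied to $M'$ the right-hand side depends only on $a'\bmod (M')^n$, hence only on $a\bmod M^{n+1}$. Assembling the contributions over the finitely many classes modulo $M^2$ then produces the claimed description of $\mathcal{A}_{n,M}$.

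The main obstacle is the second observation, controlling $a'\bmod (M')^n$ by $a\bmod M^{n+1}$, since this is where the exponents must be tracked exactly: one needs precisely one extra power of $M$ in the modulus for $a$ (namely $M^{n+1}$ rather than $M^n$) to absorb the division by $dM$ in the formula for $a'$, and the surplus factor $d^{n-1}$ is what makes the residue modulo $(M')^n=M^n/d^n$ come out integral. I would be careful to verify the formula for $c$ uniformly in the sign of $a$, and to isolate the degenerate case $M'=1$ as above, so that the inductive hypothesis is invoked only for genuine fractions of order $n-1$.
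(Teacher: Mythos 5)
Your proof is correct and follows essentially the same route as the paper's: induction on $n$ (uniform in $M$) with Lemma \ref{um} as the base case, where the inductive step shows that shifting $a$ by a multiple of $M^{n+1}$ changes the reduced numerator of $\cchi\left(\frac{a}{M}\right)$ by a multiple of $(M')^{n}$, so that the induction hypothesis applies to the smaller denominator $M'$. The paper's computation is the same as yours, only phrased at the level of the fraction itself, via $\cchi\left(\frac{a}{M}+tM^{n}\right)=\cchi\left(\frac{a}{M}\right)+mM^{n-1}$ with $m\in\Z$, rather than through explicit numerator formulas.
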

\begin{proof}
  If $n=1$, the result is given by the previous Lemma. When $n>1$, we
  only need to guarantee that, if $a\in\Z$, $(a,M)=1$ and
  $\ord\left(\frac{a}{M}\right)=n$, then
  $\ord\left(\frac{a}{M}+tM^{n}\right)=n$, for all $t\in\Z$. Now, if
  $\cchi\left(\frac{a}{M}\right)=\frac{a'}{M'}$, where $a'\in\Z$ is
  such that $(a',M')=1$, and $M'$ is a divisor of $M$, then
  \begin{eqnarray*}
    \cchi\left(\frac{a}{M}+tM^{n}\right) & = &
    \left(\frac{a}{M}+tM^{n}\right)\left\lceil\frac{a}{M}+
      t M^{n}\right\rceil\\
    & = &
    \left(\frac{a}{M}+tM^{n}\right)\left(\left\lceil\frac{a}{M}\right\rceil
      +tM^{n}\right)\\
    & = & \cchi\left(\frac{a}{M}\right)+m M^{n-1},\ \mbox{for some $m\in\Z$}\\
    & = & \frac{a'}{M'}+m M^{n-1}.
  \end{eqnarray*}
  Noting that $\ord\left(\frac{a'}{M'}\right)=n-1$, the result follows
  by induction on $n$.
\end{proof}

From this Proposition we conclude that, when looking for rational
numbers with order $n$ in ${\mathcal A}_{n,M}$, we need only to deal
with irreducible fractions $\frac{a}{M}$ verifying $a\in\{0, 1,
\ldots, M^{n+1}-1\}$. Thus it is now easy to give examples of elements
with order $n$. For instance, it is straightforward to conclude by
induction on $n$ that, if $p$ is an odd prime number and $n\in\N_0$,
then the following numbers have order $n$:
$$\frac{(p-1)p^n+1}{p} \; ;\;
\frac{(-1)^n p^{n}+p-1}{p} ; \; \frac{-(n+1) p^{n}+p^{n-1}+1}{p} \;
\left( \forall_{n\geq 2} \right).
$$

\medskip

Denote by $A(n,M)$ the number of congruence classes modulo $M^{n+1}$
in ${\mathcal A}_{n,M}$ and by $\varphi$ the Euler function. We have
already seen that, for all $n\in\N$,
\begin{eqnarray}\label{casostriviais}
  \left\{
    \begin{array}{l}
      A(0,1)=1, \\
      A(n,1)=0,\\
    \end{array}\right. & \mbox{and, for $M>1$,}
  &
  \left\{
    \begin{array}{l}
      A(0,M)=0  \\
      A(1,M)=\varphi(M)  \mbox{ \footnotesize (by Lemma \ref{um}).}
    \end{array}\right.
\end{eqnarray}

It turns out that the sequence $\left(A(n,M)\right)_{n\in\N_0}$
satisfies a recurrence relation, for all $M>1$, as shown in the
following result.

\begin{thm}
  \label{A(n,M)}
  For $M, n \in\N$, with $M>1$ or $n>1$,
  \begin{eqnarray}\label{sum}
    A(n,M)=\varphi(M)\,\sum_{d\mid M} A(n-1,d)\left(\frac{M}{d}\right)^{n-1}.
  \end{eqnarray}
\end{thm}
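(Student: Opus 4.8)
The plan is to prove the recurrence directly, for $M>1$, by sorting the classes in $\mathcal{A}_{n,M}$ according to the reduced denominator of their image under $\cchi$; the case $M=1$ (with $n>1$) is immediate from the trivial values $A(n,1)=0$ already recorded. The starting point is an explicit description of $\cchi$ on a coprime residue. If $(a,M)=1$, write $a=\rho+Ma_1$ with $\rho=a\bmod M$ coprime to $M$; then $\ceil{a/M}=a_1+1=:c$, so $\cchi(a/M)=ac/M$, and since $(a,M)=1$ the reduced denominator is $d=M/\gcd(c,M)$, a divisor of $M$, with reduced numerator $a'=a\,(c/e)$ where $e=M/d=\gcd(c,M)$. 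Because $\ord(a/M)=\ord(\cchi(a/M))+1$ for the non-integer $a/M$, the fraction $a/M$ has order $n$ exactly when $a'/d$ has order $n-1$; and by Proposition \ref{union} the latter condition depends only on $a'\bmod d^n$, defining $A(n-1,d)$ admissible residue classes.

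Next I would fix a divisor $d\mid M$, set $e=M/d$, fix one admissible target $a_0'\bmod d^n$ with $(a_0',d)=1$, and count the classes $a\bmod M^{n+1}$ with $(a,M)=1$, $\gcd(c,M)=e$, and $a'\equiv a_0'\pmod{d^n}$. Using the bijection $a\bmod M^{n+1}\leftrightarrow(\rho,c)$ with $\rho\in(\Z/M)^{*}$ and $c\bmod M^n$ (coming from $c=a_1+1$), the condition $\gcd(c,M)=e$ becomes $c=ec'$ with $(c',d)=1$, and $c\bmod M^n$ corresponds to $c'\bmod d^ne^{n-1}$. Substituting, the requirement $a'\equiv a_0'$ turns into the single quadratic congruence
\[
f(c'):=de^2(c')^2+(\rho-de)c'\equiv a_0'\pmod{d^n}
\]
in the unknown $c'$, for each fixed $\rho$.

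The heart of the argument is to show that, for every unit $\rho$ and every $a_0'$ coprime to $d$, this congruence has exactly one solution $c'\bmod d^n$ (necessarily coprime to $d$). Reducing modulo $d$ gives $\rho c'\equiv a_0'\pmod d$, which has the unique solution $c'\equiv a_0'\rho^{-1}\pmod d$, a unit mod $d$. Since $f'(c')=2de^2c'+(\rho-de)\equiv\rho\pmod d$ is a unit modulo every prime dividing $d$, Hensel's lemma (applied prime-power by prime-power via the CRT) lifts this solution uniquely from modulus $d$ to modulus $d^n$. The unique solution $c'\bmod d^n$ then lifts to exactly $d^ne^{n-1}/d^n=e^{n-1}$ residues $c'\bmod d^ne^{n-1}$, all coprime to $d$; together with the $\varphi(M)$ free choices of $\rho$, this yields $\varphi(M)e^{n-1}=\varphi(M)(M/d)^{n-1}$ classes $a\bmod M^{n+1}$ mapping to the target $a_0'$. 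Finally, summing over the $A(n-1,d)$ admissible targets and over the divisors $d\mid M$ produces $A(n,M)=\varphi(M)\sum_{d\mid M}A(n-1,d)(M/d)^{n-1}$, as claimed.

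I expect the main obstacle to be the lifting step: one must set up the quadratic congruence and the nested moduli ($c'$ modulo $d^ne^{n-1}$ against a congruence modulo $d^n$) carefully to extract the factor $e^{n-1}$, and justify unique Hensel lifting when $d$ is composite and $\gcd(d,e)$ need not be $1$---the saving grace being that only the invertibility of $f'\equiv\rho$ modulo $d$ is needed. A secondary point to verify is the bijectivity of the parametrization $a\leftrightarrow(\rho,c')$ and that the preimage count is genuinely independent of which admissible target $a_0'$ is chosen.
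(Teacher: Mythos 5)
Your proposal is correct and takes essentially the same route as the paper: your parametrization $a\leftrightarrow(\rho,c')$, the quadratic congruence $de^2(c')^2+(\rho-de)c'\equiv a_0'\pmod{d^n}$, and the Hensel-plus-CRT uniqueness argument are precisely the paper's map $\Phi$ (with $\Phi_2(a)=a\bmod M$ playing the role of your $\rho$, and $k$ that of your $c'$), its congruence $\frac{M^2}{d}k^2+(c-M)k\equiv r\pmod{M^{n-1}d}$, and its lifting step. The only difference is bookkeeping: the paper tracks the image modulo $M^{n-1}d$ so as to obtain a bijection onto $\bigl(\bigcup_{d\mid M}Y_d\bigr)\times W$, whereas you track it modulo $d^n$ and count the $e^{n-1}=(M/d)^{n-1}$ lifts in each fiber, which is a trivially equivalent reorganization.
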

\begin{proof}
  For $n=1$, the result is a consequence of
  \eqref{casostriviais}. When $n>1$, we can ignore the divisor $1$ in
  the sum, since $A(n-1,1)=0$. For each divisor $d>1$ of $M$, let
  \begin{equation*}
    Y_d  =  {\mathcal A}_{n-1,d} \,\cap\, [1, M^{n-1}d[.
  \end{equation*}
  Also set
  \begin{eqnarray*}
    W & = &\{c\in\N: (c,M)=1\}\,\cap\, [1, M[\, ,\\[1mm]
    X & = & {\mathcal A}_{n,M}\, \cap\, [1,\varphi(M^{n+1})]\, .
  \end{eqnarray*}
  Observe that $\#W=\varphi(M)$ and, as $n>1$, that
  $Y_1=\emptyset$. Besides, by Proposition~\ref{union}, the set $Y_d$
  has precisely $A(n-1,d)\left(\frac{M}{d}\right)^{n-1}$ elements.

  Consider now the map
  \begin{eqnarray*}
    \begin{array}{rclc}
      \Phi=(\Phi_1,\Phi_2): & X & \longrightarrow &
      \left({\displaystyle \bigcup_{d\mid M}Y_d}\right)\times W
    \end{array}
  \end{eqnarray*}
  defined as follows. Given $a\in X$ such that
  $\cchi\left(\frac{a}{M}\right)=\frac{b}{d}$, where $d$ is a divisor
  of $M$, $b\in\N$ and $(b,d)=1$, take
  \begin{eqnarray*}
    \Phi_1(a)&=&\frac{r}{d}\, , \text{ where $r$ is the remainder of the
    division of $b$ by $M^{n-1}d$},\\
  \Phi_2(a) &=&\text{ the remainder of the division of $a$ by $M$}.
  \end{eqnarray*}
  Notice that $\Phi_1(a)\in Y_d$ because $b\equiv r\pmod{M^{n-1}d}$,
  hence $b\equiv r\pmod{d^n}$, and, as
  $\ord\left(\frac{b}{d}\right)=n-1$, by Proposition~\ref{union} we
  also have $\ord\left(\frac{r}{d}\right)=n-1$.

  \medskip

  To show that $\Phi$ is a bijection, let $d\not=1$ be a divisor of
  $M$ and $(\frac{r}{d},c)\in Y_d\times W$. Then, for $a\in X$,
  \begin{eqnarray*}
    \Phi(a)=\left(\frac{r}{d},c\right) & \Longleftrightarrow &
    \left\{\begin{array}{l}
        \Phi_1(a)=\frac{r}{d}\\
        \Phi_2(a)=c
      \end{array}
    \right.\\
    &\Longleftrightarrow&
    \left\{\begin{array}{l}
        d\,\frac{a}{M}\ceil{\frac{a}{M}}\equiv r\pmod{M^{n-1}d}\\
        \exists y\in\N,\; 1\leq y<M^n:\; a=My+c.
      \end{array}
    \right.
  \end{eqnarray*}
  As $\cchi\left(\frac{a}{M}\right)$ is an irreducible fraction with
  denominator $d$, then $\ceil{\frac{a}{M}}$ (which is equal to $y+1$)
  must be a multiple of $\frac{M}{d}$, and so
  \begin{eqnarray*}
    \Phi(a)=
    \left(\frac{r}{d},c\right)
    & \Longleftrightarrow &
    \left\{\begin{array}{l}
        \exists y\in\N,\; 1\leq y<M^n,\; \exists k\in\N:\;
        y+1=k\frac{M}{d}\wedge a=My+c\\
        \frac{M^2}{d}\,k^2+(c-M)\,k\equiv r\pmod{M^{n-1}d}
      \end{array}
    \right.
  \end{eqnarray*}
  This last quadratic congruence is equivalent to a system of
  congruences of the form
  $$\frac{M^2}{d}\,k^2+(c-M)\,k\equiv r\pmod{p^t},$$
  where $p$ is a prime that divides $M$ with multiplicity
  $t\in\N$. But each of these congruences has a (unique) solution
  (modulo $p^t$) (details in \cite{Vin}). This is due to the fact that
  this congruence reduces modulo $p$ to $c k\equiv r\pmod{p}$, and
  $c\not\equiv 0\pmod{p}$ since $c\in W$; moreover, the formal
  derivative modulo $p$ of the quadratic polynomial on $k$ is $(c-M)$,
  which is not a multiple of $p$.  Using the Chinese Remainder
  Theorem, we conclude that there exists a (unique) $k\in\N$ such that
  $1\leq k\leq M^{n-1}d$ and $\frac{M^2}{d}\,k^2+(c-M)\,k\equiv
  r\pmod{M^{n-1}d}$. Besides, as $(r,M)=1$, we have
  $k<M^{n-1}d$. Therefore, from
  $\frac{a}{M}<\ceil{\frac{a}{M}}=y+1=k\frac{M}{d}$, we deduce that
  $a<k\frac{M^2}{d}<M^{n+1}$.
\end{proof}
As a consequence of the surjectivity of the function $\Phi$ defined in
the proof just presented, we have that:
\begin{remark}\label{surjectivity}
  If $n\in\N$ and $(q_j)_{j\leq n}$ is a finite sequence of positive
  integers where $q_{j+1}$ divides $q_j$ for any $j$, then there
  exists $x\in\Q$ such that $\cchi^j(x)=\frac{p_j}{q_j}$ with
  $(p_j,q_j)=1$.
\end{remark}

In the particular case of $M$ being equal to a power of a prime, we
get a closed formula for $A(n,M)$.
\begin{cor}\label{Anpk}
  Given a prime $p$ and $k,n\in\N$, then
  $A(n,p^k)=\binom{n+k-2}{n-1}\,\left(\varphi(p^k)\right)^n$.
\end{cor}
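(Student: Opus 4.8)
The plan is to induct on $n$, using the recurrence \eqref{sum} from Theorem~\ref{A(n,M)} specialized to $M=p^k$, whose only divisors are the powers $p^j$ with $0\leq j\leq k$. For the base case $n=1$ the claimed formula reads $\binom{k-1}{0}\varphi(p^k)=\varphi(p^k)$, which is precisely the value recorded in \eqref{casostriviais}. For the inductive step I would assume the formula holds at level $n-1$ for \emph{every} power of $p$ and substitute it into \eqref{sum}.

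Concretely, since $A(n-1,1)=0$ for $n\geq 2$, the $j=0$ term drops out and the recurrence becomes
\begin{eqnarray*}
A(n,p^k)=\varphi(p^k)\sum_{j=1}^{k}A(n-1,p^j)\,\left(p^{k-j}\right)^{n-1}.
\end{eqnarray*}
Inserting $A(n-1,p^j)=\binom{n+j-3}{n-2}\left(\varphi(p^j)\right)^{n-1}$ and writing $\varphi(p^j)=p^{j-1}(p-1)$, the key observation is that the dependence on $j$ in the power factors collapses: one has $\left(\varphi(p^j)\right)^{n-1}\left(p^{k-j}\right)^{n-1}=\left(p^{k-1}(p-1)\right)^{n-1}=\left(\varphi(p^k)\right)^{n-1}$, independently of $j$. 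This is the crux of the argument, since it completely decouples the exponential factors from the summation index, pulls a clean $\left(\varphi(p^k)\right)^{n}$ out front, and reduces everything to a sum of binomial coefficients.

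It then remains to evaluate $\sum_{j=1}^{k}\binom{n+j-3}{n-2}$. Reindexing by $i=n+j-3$, this is $\sum_{i=n-2}^{n+k-3}\binom{i}{n-2}$, which the hockey-stick identity collapses to $\binom{n+k-2}{n-1}$, exactly the asserted coefficient. I do not anticipate a serious obstacle: the only genuine idea is spotting the $j$-independent collapse of the power factors, after which the computation is a routine application of the hockey-stick identity. The one place to be careful is the bookkeeping that justifies discarding the $j=0$ divisor, so that the induction hypothesis is applied only to the powers $p^j$ with $j\geq 1$, for which the formula is available.
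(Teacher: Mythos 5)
Your proof is correct and follows essentially the same route as the paper's: induction on $n$ via the recurrence of Theorem~\ref{A(n,M)} specialized to $M=p^k$, the collapse $\varphi(p^j)\,p^{k-j}=\varphi(p^k)$ (the paper's identity \eqref{varphi}) to decouple the summation index, and the hockey-stick evaluation of the resulting binomial sum. The only difference is cosmetic: the paper phrases the step as passing from $n$ to $n+1$, while you pass from $n-1$ to $n$.
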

\begin{proof}
  Firstly recall that the map $\varphi$ verifies
  \begin{equation}
    \label{varphi}
    \varphi(x^{k+1})=x^k\varphi(x), \quad\mbox{for } x,k\in\N.
  \end{equation}
  As mentioned before, $A(1,M)=\varphi(M)$, and therefore the formula
  is valid for $n=1$ and all $k\in\N$. Let us proceed by induction on
  $n$. If $k\in\N$ and $n\geq 1$, then, by Theorem \ref{A(n,M)}, we
  have
  \begin{eqnarray*}
    A(n+1,p^{k}) & = &
    \varphi(p^{k})\,\sum_{d\mid p^{k}}\left(\frac{p^{k}}{d}\right)^{n} A(n,d)\\
    & = & \varphi(p^{k})\,\sum_{i=1}^{k}p^{(k-i)n}
    \binom{n+i-2}{n-1}\varphi(p^i)^{n},\quad\mbox{by induction}\\
    & = & \varphi(p^k)^{n+1}\sum_{i=1}^{k}
    \binom{n+i-2}{n-1},\quad\mbox{ by }\eqref{varphi}\\
    & = & \binom{n+k-1}{n}\varphi(p^k)^{n+1}.
  \end{eqnarray*}
\end{proof}

Using the recurrence formula given by Theorem \ref{A(n,M)}, we have
easily obtained the values of $A(n,M)$, with $1\leq n\leq 5$ and
$M\leq 20$, as displayed in Figure~\ref{fig:tabela}.

\begin{figure}[!h]
  \centering {\scriptsize
    \begin{tabular}{|c||l|l|l|l|l|l|l|l|l|l|l|}\hline
      \backslashbox{n\kern-3em}{\kern-1em M}
      &2&	3&	4     &5	& 6	&7&	8
      &9	&10&11 & 12\\ \hline\hline
      1	&1	&2	&2	&4	&2	&6&	4	&6
      &4	&10	& 4\\ \hline
      2	&1	&4	&8	&16	&18	&36	&48	&72
      &68	&100 & 112	\\ \hline
      3	&1	&8	&24	&64	&86	&216	&384	&648
      &628&	1000 & 1424\\ \hline
      4	&1	&16&	64	&256&	354	&1296&	2560&	5184&
      5060&	10000 & 13952\\ \hline
      5	&1	&32	&160	&1024	&1382&	7776	&15360
      &38880\!\!&	39124\!\!	&100000\!\! &120768\!\!\\ \hline
    \end{tabular}}\vspace{5mm}
  {\scriptsize
    \begin{tabular}{|c||l|l|l|l|l|l|l|l|}\hline
       \backslashbox{n\kern-3em}{\kern-1em M}&	13&	14	&15&	16
      &17&	18&	19&	20\\ \hline
      1			&12	&6	&8&	8	&16	&6
      &18	&8\\ \hline
      2		&	144	&150&	240&	256	&256&	270
      &324	&416\\ \hline
      3		&1728&	2058	&3872&	5120&	4096	&5670
      &5832	&9952\\ \hline
      4	    &	20736&	24774	&52800	&81920&	65536	&93798
      &104976	& 184576\\ \hline
      5		&248832\!&287466\!	&668288\!
      &1146880\!&1048576\!&1396278\!&1889568\!&3048576\!\\ \hline
    \end{tabular}}
  \label{fig:tabela}
  \caption{\small The value of $A(n,M)$ for $1\leq n\leq 5$ and $M\leq20$.}
\end{figure}

The proof of the previous theorem provides an algorithm to explicitly
compute all the congruence classes of all elements $a\in\Z$ such that
$\frac{a}{M}$ has a certain order. This might be used to decide
whether a rational number has a given order $n$, but it has the
drawback that it would require solving a number of congruences that
grows exponentially with $n$.

\medskip

\begin{ex}
  From Corollary \ref{Anpk}, we know that, if $p$ is prime and $n
  \in\N$, then $A(n,p)=(p-1)^n$. This means that the set of
  irreducible fractions $\frac{a}{p}$ with order $n$ is a disjoint
  union of $(p-1)^n$ arithmetic progressions of ratio $p^{n+1}$. For
  instance, with $a\in\Z$,
  \begin{eqnarray*}
    \ord\left(\frac{a}{2}\right)=n &\iff& a \equiv 2^n+1 \pmod{2^{n+1}}\\
    \ord\left(\frac{a}{3}\right)=1&\iff& a \equiv 7 , 8 \pmod{3^2}\\
    \ord\left(\frac{a}{3}\right)=2&\iff& a \equiv 4 , 11 , 14 , 19 \pmod{3^3}\\
    \ord\left(\frac{a}{3}\right)=3&\iff& a \equiv 13 , 22 , 55, 56 , 59 ,
    64 , 74,  77 \pmod{3^4}\\
    \ord\left(\frac{a}{3}\right)=4&\iff& x \equiv 20, 23, 40, 83, 86, 109,
    118, 128, 131, 157, 163, 172,\\
    &&\hphantom{mm} 191, 194, 211,  229 \pmod{3^5}\\
    \ord\left(\frac{a}{5}\right)=1&\iff& a \equiv 21 , 22, 23, 24 \pmod{5^2}\\
    \ord\left(\frac{a}{5}\right)=2&\iff& a \equiv 18, 29, 32, 37, 44, 52, 56,
    58, 66,  78, 86, 92, 101, 109,\\
    &&\hphantom{mm} 113, 114 \pmod{5^3}
  \end{eqnarray*}
\end{ex}
\section{Numbers of infinite order}

We could not find any number outside $]0,1[$ with infinite order.
Nevertheless, we were able to show that almost all numbers have finite
order, whose proof is the aim of this section.

Proposition~\ref{union} yields that, given $M\in\N$, $a\in\Z$ with
$(a,M)=1$ and $x=\frac{a}{M}$,
\begin{enumerate}
\item[a)] for any $n\in\N_0$, the probability that $x$ has order $n$
  is
  $\frac{A(n,M)}{\varphi(M^{n+1})}$;\\[1mm]
\item[b)] the probability that $x$ has finite order equals
  ${\dip\sum_{n=0}^\infty}\frac{A(n,M)}{\varphi(M^{n+1})}$.
\end{enumerate}
Let us compute this probability.

\begin{thm}
  \label{prob}
  If $M\in\N$, then the probability that $\frac{a}{M}$, with
  $(a,M)=1$, has finite order is equal to $1$.
\end{thm}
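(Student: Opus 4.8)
The plan is to reduce everything to a recurrence for the probabilities themselves and then to solve a simple functional equation by induction on $M$. Write $P(n,M)=\frac{A(n,M)}{\varphi(M^{n+1})}$ for the probability (item (a) above) that $\frac{a}{M}$ has order $n$, so that the quantity to be computed is $S(M)=\sum_{n=0}^{\infty}P(n,M)$ from item (b). The first thing I would do is transform the recurrence of Theorem~\ref{A(n,M)} into a clean recurrence for the $P(n,M)$. Using the elementary identity $\varphi(M^{n+1})=M^{n}\varphi(M)$ together with $\varphi(d^{n})=d^{n-1}\varphi(d)$ and $d^{n-1}\left(\frac{M}{d}\right)^{n-1}=M^{n-1}$, dividing \eqref{sum} by $\varphi(M^{n+1})$ makes all the powers collapse and yields, for $n\geq 1$,
$$P(n,M)=\frac{1}{M}\sum_{d\mid M}\varphi(d)\,P(n-1,d),$$
with $P(0,M)=1$ if $M=1$ and $P(0,M)=0$ otherwise. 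One checks that this also reproduces $P(1,M)=\frac{1}{M}$, consistent with \eqref{casostriviais}.

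The second step is to sum this recurrence over all $n\geq 1$. For $M>1$ we have $P(0,M)=0$, so summing gives $S(M)=\frac{1}{M}\sum_{d\mid M}\varphi(d)\,S(d)$. Isolating the $d=M$ term and clearing denominators turns this into the functional equation
$$\bigl(M-\varphi(M)\bigr)\,S(M)=\sum_{\substack{d\mid M\\ d<M}}\varphi(d)\,S(d).$$

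The final step is a strong induction on $M$. The base case $M=1$ is immediate, since $P(0,1)=1$ and $P(n,1)=0$ for $n\geq 1$, giving $S(1)=1$. Assuming $S(d)=1$ for every proper divisor $d$ of $M$, the right-hand side above becomes $\sum_{d\mid M,\,d<M}\varphi(d)$, which by the classical identity $\sum_{d\mid M}\varphi(d)=M$ equals $M-\varphi(M)$. As $M-\varphi(M)>0$ for $M>1$, I may cancel this factor and conclude $S(M)=1$, completing the induction and hence the theorem.

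The only genuinely delicate point, and the one I would treat most carefully, is the interchange of the two infinite summations used to pass from the recurrence for $P(n,M)$ to the functional equation for $S(M)$; everything else is formal arithmetic. To handle it I would first record, from the disjointness of the sets $\mathcal{A}_{n,M}$ over distinct $n$ (Proposition~\ref{union}), that every $P(n,M)\geq 0$ and that the partial sums $\sum_{n\leq N}P(n,M)$ never exceed $1$. This a priori bound guarantees both the convergence of $S(M)$ and, via nonnegativity of all terms, the legitimacy of reordering the double sum (Tonelli), after which the identity $\sum_{d\mid M}\varphi(d)=M$ does the rest.
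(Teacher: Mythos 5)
Your proof is correct and follows essentially the same route as the paper's: induction on $M$, dividing the recurrence of Theorem~\ref{A(n,M)} by $\varphi(M^{n+1})$ via $\varphi(x^{k+1})=x^k\varphi(x)$, summing over $n$, isolating the $d=M$ term, and closing with Gauss' identity $\sum_{d\mid M}\varphi(d)=M$ and the fact that $\varphi(M)<M$. Your explicit appeal to Tonelli for the interchange of the two sums is a careful touch that the paper leaves implicit, but it does not change the argument.
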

\begin{proof}
  From the meaning of $A(n,M)$ it is clear that the partial sums of
  the series
  $\mathcal{P}(M)=\dip\sum_{n=0}^\infty\frac{A(n,M)}{\varphi(M^{n+1})}$
  are bounded by $1$, and so the series converges, for any given $M$.
  We need to show that its sum is $1$. We will prove this by induction
  on $M$.  Obviously $\mathcal{P}(1)=1$, as $A(0,1)=1$ and $A(n,1)=0$
  if $n\geq1$. Consider $M>1$ and assume that $\mathcal{P}(M')=1$ for
  all $M'<M$. Then
  \begin{eqnarray*}
    \mathcal{P}(M) & = &
    \sum_{n=1}^\infty\frac{A(n,M)}{\varphi(M^{n+1})},\quad \mbox{as
      $A(0,M)=0$}\\
    & = & \frac{1}{M} \sum_{n=1}^\infty \sum_{d\mid M} \varphi(d) \,
    \frac{A(n-1,d)}{\varphi(d^{n})},\quad\mbox{by (\ref{sum}) and
      (\ref{varphi})}\\
    & = & \frac{1}{M}\sum_{d\mid M}\varphi(d)\sum_{n=1}^\infty
    \frac{A(n-1,d)}{\varphi(d^{n})}\\
    & = & \frac{1}{M}\sum_{d\mid M}\varphi(d)\sum_{n=0}^\infty
    \frac{A(n,d)}{\varphi(d^{n+1})}.
  \end{eqnarray*}

  By hypothesis, $\dip\sum_{n=0}^\infty
  \frac{A(n,d)}{\varphi(d^{n+1})}=\mathcal{P}(d)=1$ if $d<M$;
  therefore, using Gauss' Lemma
  $$\sum_{d\mid x}\varphi(d)=x, \quad\mbox{for } x\in\N,$$
  we deduce that
  \begin{eqnarray*}
    \mathcal{P}(M) & = & \frac{1}{M}\left(\sum_{d\mid M}\varphi(d)-
      \varphi(M)+\varphi(M)\mathcal{P}(M)\right)\\
    & = & \frac{1}{M}\left(M- \varphi(M)+\varphi(M)\mathcal{P}(M)\right) \\
    & = & 1-\frac{\varphi(M)}{M}+\frac{\varphi(M)}{M}\mathcal{P}(M)
  \end{eqnarray*}
  and so $\mathcal{P}(M)=1$, as $\varphi(M)<M$.
\end{proof}

\begin{remark} From this theorem it is easy to infer that, for any
  denominator $M$ and $n\in \N$,\\[1mm]
  \begin{enumerate}
  \item ${\displaystyle\lim_{k\rightarrow +\infty} \frac{\#\{1\leq
        a\leq k: a \in {\mathcal A}_{n,M}\}}{k}=
      \frac{A(n,M)}{M^{n+1}}}$.\\[2mm]
  \item For any $M\in\N$, ${\dip \lim_{k\rightarrow +\infty}
      \frac{\#\{1\leq a\leq k:\; \exists n \in \N_0: \; a \in
        {\mathcal A}_{n,M}\}}{k}= \frac{\varphi(M)}{M}}.$\\[2mm]
  \item The density of the integers whose orbits does not reach $\Z$
    is zero.
  \end{enumerate}
\end{remark}

\begin{cor}
  There is no infinite arithmetic progression in $\Q$ whose elements
  have infinite order.
\end{cor}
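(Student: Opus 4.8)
The plan is to deduce the statement from the density estimate recorded in the preceding Remark, the decisive point being that an infinite arithmetic progression has positive natural density and hence cannot be contained in a set of density zero.

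First I would write an arbitrary infinite arithmetic progression in $\Q$ in the form $\left\{\frac{a+tb}{M}:t\in\N_0\right\}$ with $a\in\Z$ and $b,M\in\N$: this is achieved by clearing denominators and, if the original progression is two-sided or has negative step, passing to an infinite one-sided subprogression with positive common difference. The difficulty is that these fractions need not be irreducible, and their denominators \emph{in lowest terms} may vary with $t$, so the fixed-denominator density results cannot be applied directly.

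To circumvent this, I would observe that $(a+tb,M)$ depends only on $t$ modulo $M$, since $t\equiv t'\pmod{M}$ forces $a+tb\equiv a+t'b\pmod{M}$. Fixing a residue $t_0\in\{0,\ldots,M-1\}$ and setting $d=(a+t_0b,M)$ and $M'=M/d$, the subprogression indexed by $t=t_0+sM$ (with $s\in\N_0$) consists of the irreducible fractions $\frac{e+sf}{M'}$, where $e=(a+t_0b)/d$ and $f=(M/d)\,b$, all of whose numerators are coprime to $M'$. If $M'=1$ these are integers, hence of order $0$, and the statement holds; so I may assume $M'>1$.

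Finally, by Theorem~\ref{prob} together with part (2) of the Remark, the set of integers $c$ coprime to $M'$ for which $\frac{c}{M'}$ has infinite order has natural density zero in $\Z$, whereas the arithmetic progression $\{e+sf:s\in\N_0\}$ has density $\frac1f>0$. The latter therefore cannot be contained in the former, so some numerator $e+sf$ produces a fraction $\frac{e+sf}{M'}$ of finite order; the corresponding element $\frac{a+tb}{M}$ of the original progression then has finite order as well. The main obstacle is exactly the reduction to a constant reduced denominator carried out in the previous step; once the progression has fixed denominator and numerators coprime to it, the contrast between the positive density of the progression and the zero density of the infinite-order set closes the argument at once.
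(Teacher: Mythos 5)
Your proof is correct and takes essentially the same route as the paper's: restrict the progression to a congruence class of indices so that the reduced denominator becomes constant and the numerators form an arithmetic progression coprime to it, then play the positive density of that progression against the density-zero set of infinite-order numerators coming from Theorem~\ref{prob}. If anything, you spell out the gcd-stabilization step that the paper's proof only asserts when it claims the progression contains one of the form $\left(\frac{s+nrM}{M}\right)_{n\in\N}$ with $(s,M)=1$.
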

\begin{proof}
  Any arithmetic progression in $\Q$ contains an arithmetic
  progression of the form $\left(\frac{s+nrM}{M}\right)_{n\in\N}$,
  with $M\in\N$, $s,r\in\Z$ and $(s,M)=1$. In that case, we note that
  $(s+nrM,M)=1$ and that a number of the form $\frac{a}{M}$, with
  $(a,M)=1$, has probability $\frac{1}{2rM}$ of belonging to this
  arithmetic progression. Using Theorem~\ref{prob}, we conclude that
  the arithmetic progression must include elements of finite order.
\end{proof}

For a while we were tempted to believe that if, for a fixed $M\in\N$
and for all $n\in\N_0$, one has $c_n$ disjoint congruence classes
modulo $M^{n+1}$ such that $\dip\sum_{n\geq 0} \frac{c_n}{M^{n+1}}=1$,
then the union of all those classes is all $\Z$, with the possible
exception of a finite set. This, however, is not true, as can be
confirmed by the next example, in which we chose $M=3$ just to
simplify matters, but where we could just as well have taken an
arbitrary $M$.

\begin{ex}
  The idea is to show that one can inductively construct, for each
  $n\in\N_0$ and $k\in\{1, 2, 3, \ldots, 2^{n}\}$, an element
  $x_{n,k}$ in $\{1, 2, \ldots, 3^{n+1}\}$ in such a way that, if
  $(n,k)\neq (m,s)$, then the classes modulo $3^{\min\{m,n\}+1}$ of
  $x_{n,k}$ and $x_{m,s}$ are disjoint. Moreover, one wants to choose
  those elements so that the union of all these congruence classes
  does not contain any number of the set $Y=\{1+3^n:n\in\N_0\}$, for
  example.

  We start with $x_{0,1}=3$. Now, for a given $n\geq 1$, suppose that
  we have already defined $x_{m,k}\in\{1, 2, \ldots, 3^{m+1}\}$, for
  all $m<n$, satisfying the above mentioned conditions. In the set
  $\{1,2\ldots, 3^{n+1}\}$, there are $n+1$ elements of $Y$ and, for
  each $0\leq i\leq n-1$, we have $3^{n-i}$ elements in the class of
  $x_{i,j}$ modulo $3^{i+1}$. We thus have a total of
  $(n+1)+3^n+2\times3^{n-1}+2^2\times3^{n-2}+\cdots+2^{n-1}\times 3$,
  that is $3^{n+1}-3\times 2^n+(n+1)$ elements already ``used''. Then
  $x_{n,k}$, for $1\leq k\leq 2^{n}$, can be selected among the
  remaining $3^{n+1}-\left(3^{n+1}-3\times 2^n+(n+1)\right)$ elements
  of $\{1,2\ldots,3^{n+1}\}$. This is possible since
  $3^{n+1}-\left(3^{n+1}-3\times 2^n+(n+1)\right)=3\times
  2^n-(n+1)\geq 2^n$.

  We therefore obtain $2^n$ classes modulo $3^{n+1}$, for all
  $n\in\N_0$, which are all disjoint and whose complement contains the
  infinite set $Y$.
\end{ex}

This example shows that if indeed it is true that all rational numbers
bigger that $1$ have finite order, as the numerical computations
suggest, then in order to prove it one has to better understand the
relationships among the congruence classes that make up the sets
${\mathcal A}_{n,M}$.

\section{An efficient algorithm}
\label{sec:an-effic-algor}

In this section we describe a simple algorithm that verifies if a
rational number has an order less than a fixed bound. It was precisely
this algorithm that allowed us to obtain the results presented in the
previous tables. It runs very quikly, as long as the computer is able
to store the appropriate numbers.

The strategy behind this procedure is the following. Take
$\frac{a}{M}$ and consider the sequence $(a_n)_{n\in\N_0}$ defined by
\begin{eqnarray*}
  a_0 &=& a \\
  a_{n+1} &=& M\cchi\left(\frac{a_n}{M}\right).
\end{eqnarray*}
If we know that $\frac{a}{M}$ has order less or equal to $N$, then,
for $1\leq s<N$, the order of $\frac{a_s}{M}$ is less or equal to
$N-s$. Hence, using Proposition \ref{union}, we can replace $a_s$ by
the remainder of the division of $a_{s}$ by $M^{N+1-s}$. The order of
$\frac{a}{M}$ will then be the first $s$ such that $a_s$ is a multiple
of $M$. We summarize this as follows:

\bigskip

\noindent \textbf{Algorithm}: Given $M\in\N$, $a\in\Z$ and $N\in\N$,
consider $\frac{a}{M}$ and define the sequence $(r_n)_{n\in\N_0}$ as
\begin{eqnarray*}
  r_0 &=& \mbox{ the remainder of the division of $a$ by $M^{N+1}$} \\
  r_{n+1} &=& \mbox{ the remainder of the division of
    $M\cchi\left(\frac{a_n}{M}\right)$ by $M^{N+1-s}$.}
\end{eqnarray*}
Then
$$ \ord\left(\frac{a}{M}\right)=
\left\{\begin{array}{ll} k, & \mbox{if $\;\exists k\leq N:\; M\mid
      r_k$
      and $M\nmid r_s$, for all $s<k$},\\[2mm]
    > N, & \mbox{otherwise.}
  \end{array}\right.
$$
We highlight the fact that this algorithm only needs to deal with
numbers of length less or equal to $M^{N+1}$, and that each step
reduces the bounds involved.

\section{Other approaches}

An alternative approach to study the dynamics of the map $\cchi$ would
be to use the finite expansions of the successive numerators in the
bases given by the respective denominators. One of the problems with
this procedure is that $\cchi$ involves a multiplication in which
carries intervene.  In the particular case where the initial point is
a fraction with denominator equal to a prime number $p$, and without
the carries, the dynamics of $\cchi$ would be one of an infinite
dimensional linear cocycle with base space equal to the space of
almost zero sequences on $p$ symbols and fibers over the field
$\F_p$. This sounds already intricate, but the interference of the
carries is a source of additional difficulties, causing $\cchi$ to
resemble a generalized shift with sensitive dependence on initial
conditions \cite{Mo}. This seems to hint, once more, that the question
of determining whether or not there are rational numbers bigger than
$1$ of infinite order may be a hard problem.

\medskip

For each prime $p$ there is a map related to $\cchi$ that can be
defined on the field of $p$-adic numbers as follows. Given
$$x=\sum_{j\geq k} a_j p^j\in\Q_p,$$
where $k\in\Z$ and $a _j \in \{0,1,...,p-1\}$ for all $j$, put
$$\lceil x\rceil_p= 1+\sum_{j\geq 0} a_j p^j.$$ Now let $\cchi_p:
\Q_p\to\Q_p$ be the map defined by $\cchi_p(x)= x \lceil x\rceil_p$,
which clearly coincides with the quasi-quadratic map when restricted
to the rational numbers whose denominator is a power of $p$ and that
are not integers. Note that this map does not agree with $\cchi$ on
the integers, but has the advantage of being continuous.

One can now consider the question of knowing if, for any $p$-adic
number $x$, there exists $n \in \mathbb{N}_0$ such that $\cchi_p^n
(x)$ is a $p$-adic integer. It turns out that, in this case, there are
elements of infinite order other than the rational numbers in
$]0,1[$. This can be seen as follows. By an argument similar to the
one used to prove Theorem~\ref{A(n,M)}, we may verify that, given $k,n
\in \mathbb{N}$, the set
$$\Lambda_{k,n} =\left\{a\in\Z_p:\;
  \cchi_p^n \left(\frac{a}{p^k}\right)\not\in
  \frac{1}{p^{k-1}}\Z_p\right\}$$ is nonempty and a finite union of
congruence classes in $\mathbb{Z}_p$ modulo $p^{(n+1)k}$, and so
compact as well. It is clear that
$\Lambda_{k,n+1}\subset\Lambda_{k,n}$. Moreover, one can show that the
disjoint congruence classes that constitute $\Lambda_{k,n+1}$ are
equally distributed inside the ones that form $\Lambda_{k,n}$.
Therefore, for any fixed $k$, the set $\dip\bigcap_{n\in\N}
\Lambda_{k,n}$ is nonempty, and since it is the intersection of a
nested sequence of compact nonempty sets, each one formed by an
increasing number of balls with decreasing radius, that are scattered
through the balls of the previous set, we see that this intersection
is a perfect subset (with zero Haar measure) of the locally compact
group $\Q_p$. So, it is uncountable, and hence there are elements of
infinite order in $\Q_p$ besides the ones in $\Q\,\cap\, ]0,1[$.

Moreover, as $\cchi_3$ is the polynomial $x^2 + x$ in $\mathbb{Z}_3$,
and it is continuous, the wild nature of the orders we found among the
fractions $\displaystyle \frac{a}{3}$ (see Fig.~\ref{fig:caso3}) may
somehow be related to the chaotic behaviour of $x^2+x$ in
$\mathbb{Z}_3$, as described in \cite{FL}.

\section{Final comments and remarks}

Given a fixed integer $M$, suppose one randomly chooses an integer
$a_1$, sets $M_1=M/(M,a_1)$, and then repeats the process by randomly
choosing $a_2$, leeting $M_2=M_1/(M_1,a_2)$, and so on. Note that this
process somehow generalizes what happens to the successive
denominators of $\cchi(\frac{a}{M})$. Given $M$ and $n$, consider now
the question of determining the probability that $M_n=1$. Denote this
probability by $\mathcal{P}(n,M)$. Clearly: $\mathcal{P}(1,M)=1/M$;
$\mathcal{P}(0,M) = 1$ if $M=1$, and $\mathcal{P}(0,M) = 0$ otherwise;
if $p$ is a prime number, then $\mathcal{P}(n,p) = (1-1/p)^{n-1}/p$.
It is easy to show that the numbers $P(n,M)$ satisfy the following
recurrence relation
\begin{equation*}
  \mathcal{P}(n,M)=\sum_{d|M}\frac{\varphi(d)}{M} \mathcal{P}(n-1,d),
\end{equation*}
that is also satisfied by the numbers $A(n,M)/\varphi(M^{n+1})$, as
can easily be shown by induction on $M$ using Theorem~\ref{set}. Thus,
the probability that a rational number $\frac{a}{M}>1$, with
$(a,M)=1$, has order $n$ under $\cchi$ is exactly the same that the
random process just described ends up after $n$ steps, when starting
with $M$. This reveals that the map $\cchi$ behaves, at least in this
respect, just like a random procedure.

\medskip

There are also some curious analogies between our query, if $\cchi$
has no elements of infinite order, and both the Collatz problem and
the Erd\"os-Straus conjecture. The similarities may be only
superficial, but are nevertheless of some interest. In the Collatz
problem, Riho Terras has shown that the set of elements that have a
finite stopping time $n$, a notion analogous to our concept of order,
is a disjoint union of congruence classes (see Theorem 1.2 in
\cite{Ter}). The issue is then whether these cover all integers. This
is an open question, but Terras has also proved that the density of
the integers that do not have finite stopping time is zero. Both these
results are similar to what was shown in the present paper. As for the
Erd\"os-Straus conjecture, William Webb has shown in \cite{Webb} that
the density of the numbers for which the conjecture is false is zero,
using the fact that one can prove its validity for an infinite set of
congruence classes (see Lemma 2 in \cite{Webb}). The connection with
our problem is here less obvious, but in all three cases, ours and
these other two, one could solve the respective conundrum by showing
that a certain system of congruences is a covering of the
integers\footnote{The question of whether one can prove the
  Erd\"os-Straus conjecture by showing its validity on an infinite
  covering system of congruences is unclear, although there are good
  reasons to believe it to be an approach riddled with difficulties:
  see Terrence Tao considerations on this matter in his blog, at
  \url{http://terrytao.wordpress.com/2011/07/07/on-the-number-of-solutions-to-4p-1n_1-1n_2-1n_3}.}.

\medskip

Finally, we mention that our methods also apply to the map $x\mapsto
x\lfloor x\rfloor$, since $x\lfloor x\rfloor=\cchi(-x)$, for any $x
\in \Q \setminus \Z$.



\begin{thebibliography}{15}
%
\bibitem{FL} A. H. Fan and L. M. Liao, \emph{On minimal decomposition
    of p-adic polynomial dynamical systems}, Advances in Mathematics,
  Vol. 228, 4 (2011) 2116--2144.
%
\bibitem{Mo} C. Moore, \emph{Generalized shifts: unpredictability and
    undecidability in dynamical systems}, Nonlinearity 4 (1991)
  199--230.
%
\bibitem{Ten} G. Tenenbaum, \textsc{Introduction to Analytic and
    Probabilistic Number Theory}, Cambridge University Press, 1995.
%
\bibitem{Ter} R. Terras, \emph{A Stopping Time Problem on the Positive
  Integers}, Acta Arithmetica XXX (1976) 241--252.
%
\bibitem{Vin} I. M. Vinogradov, \textsc{Elements of Number Theory},
  Dover 1954.
%
\bibitem{Webb} W. A. Webb, \emph{On $4/n = 1/x + 1/y + 1/z$},
  Proceedings of the American Mathematical Society 25 (1970) 578--584.
%
\end{thebibliography}
\end{document}